\documentclass[12pt,reqno]{amsart}

\usepackage{amsmath, amsthm, amsopn, amssymb, microtype, bm}
\usepackage{mathrsfs}
\usepackage{mathscinet}
\usepackage{bbm}
\usepackage{enumerate, tikz, etoolbox, intcalc, geometry}
\usepackage{caption, subcaption, afterpage}
\usepackage[english]{babel}
\usepackage{enumitem}
\usepackage{tcolorbox}
\usepackage[normalem]{ulem}
\usepackage{aliascnt}
\usepackage[
    colorlinks=true,
    urlcolor=blue,
    pdfborder={0 0 0}
]{hyperref}
\hypersetup{
    linkcolor=[rgb]{0,0,0.6},
    citecolor=[rgb]{0,0.6,0}
}
\usepackage[nameinlink]{cleveref}

\newcommand{\N}{\mathbb{N}}
\newcommand{\Z}{\mathbb{Z}}

\newcommand{\B}{\mathcal{B}}

\newcommand{\PP}{\mathcal{P}}

\graphicspath{{./Pictures/}}

\newcommand{\ignore}[1]{}

\newtheorem{thm}{Theorem}[section]

\newaliascnt{lemma}{thm}
\newtheorem{lemma}[lemma]{Lemma}
\aliascntresetthe{lemma}
\newaliascnt{prop}{thm}
\newtheorem{prop}[prop]{Proposition}
\aliascntresetthe{prop}
\newaliascnt{cor}{thm}
\newtheorem{cor}[cor]{Corollary}
\aliascntresetthe{cor}
\newaliascnt{claim}{thm}

\aliascntresetthe{claim}
\newaliascnt{quest}{thm}

\aliascntresetthe{quest}
\newaliascnt{fact}{thm}
\newtheorem{fact}[fact]{Fact}
\aliascntresetthe{fact}

\theoremstyle{definition}
\newaliascnt{definition}{thm}

\aliascntresetthe{definition}
\newaliascnt{example}{thm}

\aliascntresetthe{example}
\newaliascnt{remark}{thm}

\aliascntresetthe{remark}

\crefname{thm}{Theorem}{Theorems}
\Crefname{thm}{Theorem}{Theorems}

\crefname{lemma}{Lemma}{Lemmas}
\Crefname{lemma}{Lemma}{Lemmas}

\crefname{prop}{Proposition}{Propositions}
\Crefname{prop}{Proposition}{Propositions}

\crefname{cor}{Corollary}{Corollaries}
\Crefname{cor}{Corollary}{Corollaries}

\crefname{claim}{Claim}{Claims}
\Crefname{claim}{Claim}{Claims}

\crefname{quest}{Question}{Questions}
\Crefname{quest}{Question}{Questions}

\crefname{fact}{Fact}{Facts}
\Crefname{fact}{Fact}{Facts}

\crefname{definition}{Definition}{Definitions}
\Crefname{definition}{Definition}{Definitions}

\crefname{example}{Example}{Examples}
\Crefname{example}{Example}{Examples}

\crefname{remark}{Remark}{Remarks}
\Crefname{remark}{Remark}{Remarks}

\crefname{conj}{Conjecture}{Conjectures}
\Crefname{conj}{Conjecture}{Conjectures}

\crefname{section}{Section}{Sections}
\Crefname{section}{Section}{Sections}

\crefname{figure}{Figure}{Figures}
\Crefname{figure}{Figure}{Figures}

\title{Alpern Lemma Independent of a Partition for Aperiodic Systems}

\author{Nishant Chandgotia}
\address{Tata Institute of Fundamental Research - Center for Applicable Mathematics}
\email{nishant.chandgotia@gmail.com}

\author{Zemer Kosloff}
\address{School of Mathematics, University of Bristol, Fry Building,
Woodland Road, Bristol BS8 1UG, United Kingdom\newline
Einstein Institute of Mathematics, The Hebrew University of Jerusalem,
Edmond J. Safra Campus, Givat Ram, Jerusalem 9190401, Israel}
\email{zemer.kosloff@bristol.ac.uk}
\email{zemer.kosloff@mail.huji.ac.il}

\author{Shrey Sanadhya}
\address{School of Mathematics and Statistics, University of Sydney, NSW 2006, Australia}
\email{shrey.sanadhya@sydney.edu.au}

\subjclass[2020]{28D05, 37A05, 37A50, 60F05}
\keywords{Rohlin tower; Alpern tower; Independent sets; Aperiodic transformation; Local central limit theorem}

\begin{document}

\maketitle

\begin{abstract}
We prove that for every $(X,\B,\mu,T)$, an aperiodic, probability preserving system and $\mathcal{P}$ a finite measurable partition of $X$, there exists an Alpern Multitower whose base is independent of $\mathcal{P}$. This removes the ergodicity assumption in the original proof of the independent Alpern Lemma of \cite{MR3373823}. As a consequence, we remove ergodicity assumption from recent results on limit theorems in deterministic systems \cite{Kosloff_Volny_2022,kosloffVolnyStableCLT,MR4955410}.
\end{abstract}
\section{Introduction}

The Rokhlin Lemma says that  for every aperiodic dynamical system $(X,\B,\mu,T)$, and a given integer $N$ and $\epsilon>0$, one can find a set $B$ whose translates under the transformation up to time $N$ are pairwise disjoint and they cover the whole space up to $\mu$ measure $\epsilon$. The sequence $B,TB,\ldots, T^NB$ is called a \textbf{Rokhlin tower of height N and error $\epsilon$} and $B$ is \textbf{the base of the tower}. It is a corner stone in the structure theory of dynamical systems and has far reaching consequences such as determining the largeness in the Baire category sense of subsets of measure preserving transformations, Ornstein's proof of the isomorphism theorem for Bernoulli shifts \cite{MR447525}, behavior of cocycles \cite{Lind_1978,Kornfeld_1999,Kornfeld_Losert_2004} and much more, for a review see for example \cite{MR2087594}. 

The Rokhlin Lemma Independent of a Partition, which was introduced by Katznelson and Weiss \cite{Katznelson_Weiss_1972}, states that given a finite measure partition $\PP$ of the space, one can ensure, in addition, that  the base of the tower $B$ is independent of the partition. The first reason to introduce this lemma was to remove the weak mixing assumption from Ornstein's work and this enhancement has found many applications since; among the notable ones is in Volny's proof that every aperiodic dynamical system has a cocycle which satisfies the functional central limit theorem \cite{MR1624218}.

The Alpern Lemma says that for any positive integer $N$ one can find  $B_N,B_{N+1}\subset X$, two bases of $N$ and $N+1$ Rokhlin towers respectively such that the union of the two Rokhlin towers has full measure. In what follows, the towers arising from the Alpern Lemma will be referred to as Alpern towers. See \cite{MR2431052} for a discussion about this lemma and its consequences and \cite{MR2901368} for a more recent application. 

In \cite{MR3373823} it was proved that for any $(X,\B,\mu,T)$ an aperiodic, ergodic, probability preserving system, $\mathcal{P}$ a finite measurable partition of $X$ and integer $N$, there exist $N,N+1$  Alpern towers whose bases $B_N$ and $B_{N+1}$ are independent of the partition. This result has been applied by Kosloff and Volny in \cite[Proposition 1]{Kosloff_Volny_2022} to prove that any finite valued triangular array can be embedded in an aperiodic, ergodic system. This embedding is the first pillar in the recent advances on probabilistic limit theorems in deterministic ($0$ entropy) dynamical systems. 

The goal of this note is to remove the ergodicity assumption from the independent Alpern Lemma. 
\begin{thm}\label{Thm: Aperiodic} Let $(X,\mu, \B, T)$ be an aperiodic probability preserving $\Z$ action. For any $N \in \N$ and a finite measurable partition $\PP = \{P_1,P_2,\ldots,P_t\}$ of $X$, there exist $B_N, B_{N+1} \in \B$ such that
\[
\{T^j B_h :  0\leq j < h, h\in \{N,N+1\}\}
\]
is a partition of $X$ up-to measure zero and $B_N, B_{N+1}$ are independent of $\PP$. 
\end{thm}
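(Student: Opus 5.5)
Since the ergodic case of the statement is already known from \cite{MR3373823}, the natural route is to derive \Cref{Thm: Aperiodic} from it through the ergodic decomposition and then glue the fibrewise solutions together measurably. That gluing is delicate, because ergodic components can be atomless only in total, and a measurable selection of towers with independent bases is awkward. I would therefore instead reprove the statement directly, using an argument that never needs ergodicity. The key point is that independence of $B_h$ from $\PP$ is a condition on finitely many numbers, $\mu(B_h\cap P_i)=\mu(B_h)\mu(P_i)$. So it is enough to construct an exact Alpern partition whose bases have \emph{prescribed} intersections with the atoms of $\PP$. In the aperiodic non-ergodic case, these intersections can be adjusted by moving mass along the orbits inside a fixed Rokhlin skeleton.

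\textbf{Step 1 (skeleton).} Apply the Rokhlin lemma for aperiodic maps with a very large height $M\gg N$ and a small error $\epsilon$, and refine the resulting tower by $\PP$-names. After that I would use the standard aperiodic Alpern construction: every orbit segment of length $\ge N^2$ splits into blocks of lengths $N$ and $N+1$, and the leftover error set is absorbed by first building the skeleton with a base $A$ whose return times are all at least $N^2$ (Kakutani tower over $A$, which covers all of $X$ by aperiodicity and conservativity). Then $X=\bigsqcup_{k}\bigsqcup_{j<k}T^jA_k$, where $A_k=\{x\in A: r_A(x)=k\}$ and $k\ge N^2$. This step needs no ergodicity.

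\textbf{Step 2 (freedom inside each column).} On the column over $A_k$, refined by the $\PP$-names of length $k$, every choice of a decomposition $k=aN+b(N+1)$, together with an ordering of the blocks, gives an exact Alpern partition. The choice can be made measurably and can vary from point to point inside a name-cylinder. The base $B_h$ then consists of the levels $T^jA_k$ at which a block of length $h$ starts, so $\mu(B_h\cap P_i)$ is a sum of measures of pieces of levels intersected with $P_i$. I would consider two candidate block orderings that differ by a cyclic shift of the first few blocks. Mixing them in a proportion $\theta$ (by splitting each cylinder set, which is possible because $\mu$ is nonatomic on $A$ by aperiodicity) varies $\mu(B_h\cap P_i)$ continuously and affinely in $\theta$. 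Using many such shifts, the set of achievable vectors $(\mu(B_h\cap P_i))_{i,h}$ is convex. The target is the vector $(\mu(B_h)\mu(P_i))$, and I need to show that it lies in this set. Two facts drive this. First, averaging over all $N$ (or $N+1$) shifts of the starting positions spreads $B_h$ almost uniformly along the orbit segments, making $\mu(B_h\cap P_i)\approx\mu(B_h)\mu(P_i)$ up to an error of order $N/k$ plus boundary terms. Second, the extreme choices move these quantities by an amount bounded below.

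\textbf{Step 3 (exact correction) — the main obstacle.} An approximately independent base is easy; exact independence is the problem. Without ergodicity the frequencies of $P_i$ differ from orbit to orbit, so equidistribution along a single orbit does not give $\mu(P_i)$. I would handle this by a fixed-point or intermediate value argument on the convex set of achievable vectors. Enlarge the family of moves by locally swapping one $N$-block with an adjacent $(N+1)$-block at a chosen position. Such a swap shifts one base point by one step, which changes which atom of $\PP$ it falls in. With measurable splitting, these swaps produce a neighborhood of the approximate target inside the set of achievable vectors of dimension $2(t-1)$, with the normalisations being automatic. Nondegeneracy of the swaps requires that, on a set of positive measure, $T$ moves points between different atoms. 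If some $P_i$ were $T$-invariant, then independence could be handled separately on each invariant piece, so I would first reduce to the case where no union of atoms is invariant mod $0$. Then a Brouwer or linear-algebra argument, applied with a small error $\epsilon$ from Step 2, puts the exact target inside the achievable set.
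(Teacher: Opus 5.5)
\textbf{Verdict: there is a genuine gap in Step 3, and also a problem with how you glue invariant pieces.}

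Steps 1--2 are reasonable. You build a Kakutani skeleton with long columns, choose block decompositions measurably, and get convexity of the achievable set by splitting name-cylinders. (For non-ergodic $T$ you must choose the base $A$ to be exhaustive; an arbitrary $A$ does not sweep out $X$.) But these steps only give approximately independent bases. Approximate independence is not where non-ergodicity bites: it only needs the local density of $B_h$ to be the same constant on every orbit. The whole content of the independent Alpern lemma, already in the ergodic case, is the exact correction in Step 3, and your key claim there is false.

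You assert that, once no union of atoms is invariant, local swaps of an $N$-block with an adjacent $(N+1)$-block give a full $2(t-1)$-dimensional neighbourhood of size bounded below around the approximate target. Here is a counterexample to that claim.
\begin{itemize}
\item Let $X=Z\times\{0,1\}$ with $T(z,e)=(Sz,1-e)$ for an aperiodic $S$, let $\PP=\{Z\times\{0\},Z\times\{1\}\}$, and take $N=1$.
\item $T$ is aperiodic, no union of atoms is invariant, and $T$ moves every point to the other atom.
\item Yet in every Alpern configuration, two consecutive points of $B_1$ on an orbit differ by an odd time, since only $2$-blocks lie between them.
\item So the first return map to $B_1$ maps $B_1\cap P_1$ into $B_1\cap P_2$ and vice versa. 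Being measure preserving, it forces $\mu(B_1\cap P_1)=\mu(B_1\cap P_2)$ for every configuration.
\item Consistently with this, your swap moves the $1$-block base by two steps, so it stays in the same atom.
\end{itemize}
Thus the achievable set can be lower dimensional because of arithmetic constraints coming from periodic factors, which have nothing to do with invariant atoms. In this example the frozen coordinate happens to equal the target. Your argument, however, contains no mechanism showing that the approximate configuration is exact in every rigid direction. A Brouwer or intermediate-value argument needs a ball of radius larger than the approximation error, and that is exactly what is unavailable. The Step 2 claim that ``the extreme choices move these quantities by an amount bounded below'' fails for the same reason.

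Your non-ergodic reduction also misses the two points the paper actually has to handle.
\begin{itemize}
\item \emph{Gluing needs matching relative masses.} Building towers separately on invariant pieces $Y_1,Y_2$ and taking unions gives independent bases only if $\mu(B_h\cap Y_1)/\mu(Y_1)=\mu(B_h\cap Y_2)/\mu(Y_2)$. The paper obtains this from the fixed-mass version of \cite{MR3941195}, part (c) of \Cref{fact: Kalikow}. This gives $\mu_F(B_{h,F})=1/(2h)$ on every piece, and hence $\mu(P_i\cap B_N)=\sum_{F\ni i}\mu(X_F)\mu_F(P_i)/(2N)=\mu(P_i)/(2N)$.
\item \emph{The relevant degeneracy is different.} It is not an invariant atom but an atom whose conditional expectation given $\mathcal{I}$ vanishes on an invariant set of positive measure. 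This can happen with no union of atoms invariant.
\end{itemize}

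The paper never attempts a new exact-correction argument. It splits $X$ into the invariant sets $X_F$ of \Cref{lem: Aperiodic partittion}. On each $X_F$ it builds a Kakutani tower whose return segments visit every atom of $\PP_F$ more than $M$ times (\Cref{cor: Aperiodic tower}, via a disjointness lemma plus a maximality argument that makes the base exhaustive). It then invokes the combinatorial argument of \cite{MR3373823}, for which that visiting property is the only ergodic-theoretic input.
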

In \Cref{sec: Alpern proof}, we first explain the terminology and then prove \cref{Thm: Aperiodic} building upon the very clever proof of the ergodic case in \cite{MR3373823}. In \Cref{sec: application to limit theorems}, we explain the consequence of this result regarding recent limit theorem in dynamical systems. 

{\bf Acknowledgement.} N.C. thanks the Department of Atomic Energy, Government of India, under
project identification no. RTI 4014. Z. K. was supported by the ISF (grant No. 1180/22). S.S. was supported by
the Australian Research Council grant DP240100472.   
 
\section{Proof of \cref{Thm: Aperiodic}}\label{sec: Alpern proof}
A probability preserving system is a measurable, almost everywhere invertible map $T:X\to X$ of a probability space $(X,\B,\mu)$. It is \textbf{aperiodic} if the collection of periodic points for $T$ is a $\mu$ null set. That is
\[
\mu\left(\left\{x\in X:\ \exists n\in\N, T^nx=x\right\}\right)=0.
\]
A collection $\mathcal{P}=(a)_{a\in \mathcal{P}}$ of subsets of $X$ is a \textbf{finite measurable partition} of $X$ if the cardinality of $\mathcal{P}$ is finite, all atoms are $\B$ measurable and $\mu\left(X\setminus \uplus_{a\in F}P_a\right)=0$. Finally, a subset $B\in \B$ is \text{independent} of a finite measurable partition $\PP$ if for all $a\in \PP$, 
\[
\mu(B\cap a)=\mu(B)\mu(a). 
\]
As stated in the introduction, \cref{Thm: Aperiodic} was proved for ergodic, aperiodic, probability preserving systems in \cite{MR3373823}. In \cite{MR3941195} the authors further improved the result so that the mass of the $N,N+1$ towers is arbitrary. The extension of this result to aperiodic (not necessarily ergodic) transformations is carried out by following closely the mechanism of the proof in \cite{MR3373823}. A key point is carried out in \Cref{lem: Aperiodic partittion} which is to partition the space to a finite collection of $T$ invariant sets  according to the behavior in the ergodic theorem of the sums of the vector valued function $\phi(x)=\left(1_p(x)\right)_{p\in\mathcal{P}}$.
For this reason we first start with a recap on the results and proof in  \cite{MR3373823, MR3941195}.
\subsection{Recap on the proof of Alpern Lemma independent of a partition for ergodic actions}
Given a probability preserving transformation $(X,\mathcal{B},\mu,T)$ and $A\in \mathcal{B}$, it follows from the Poincar\'e Recurrence Theorem that the function $n:X\to \N\cup\{\infty\}$ defined by 
\[
n(x):=\inf\left\{n\in\N:\ T^nx\in A\right\},
\]
is finite for $\mu$ almost every $x\in A$. In the case when $x\in A$ and $n(x)=n$, one says that \textbf{the return time of $x$ to $A$ is $n$}.

The first step in the proof of \cite[Theorem 1.2]{MR3373823} is the following.

\begin{lemma} \cite[Lemma 2.2]{MR3373823}\label{lem: ergodic theorem Kalikow}
Let $(X,\mathcal{B},\mu,T)$ be an ergodic probability preserving transformation. Let $M\in\N$ and let $\mathcal{P}=\left\{P_1,\ldots,P_t\right\}$, a finite partition such that  $\mu\left(P_i\right)>0$ for all $1\leq i \leq t$. Then there exists $S\in \mathcal{B}$ of positive measure such that if $x\in S$ with return time $n(x)=n$, then for all $1\leq i\leq t$,
\[
\left|\left\{x,Tx,\ldots,T^{n-1}x\right\}\cap P_i\right|>M. 
\]
\end{lemma}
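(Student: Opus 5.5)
We plan to build $S$ as a small set of ``long'' return times, using the ergodic theorem to control the visit counts along the resulting orbit segments. By the pointwise ergodic theorem and ergodicity, for $\mu$-a.e.\ $x$ and each $i$,
\[
\frac1n\left|\{x,\ldots,T^{n-1}x\}\cap P_i\right|\to \mu(P_i)>0.
\]
Put $\delta=\min_i\mu(P_i)/2$. By Egorov's theorem there are $L\in\N$ and a set $G$ with $\mu(G)>1-\eta$ ($\eta$ small, to be fixed) such that for every $x\in G$, every $n\geq L$ and every $i$ we have $|\{x,\ldots,T^{n-1}x\}\cap P_i|\geq \delta n$. Enlarging $L$, we may also assume $\delta L> M$. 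It then suffices to produce $S$ of positive measure with the following property: for every $x\in S$ with return time $n$, we have $x\in G$ and $n\geq L$. The count along the segment is then at least $\delta n\ge\delta L>M$.

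For the construction, take a Rokhlin tower of height $L$ and small error, with base $C$. We want a set $S$ with $S,TS,\ldots,T^{L-1}S$ pairwise disjoint, which forces all return times to $S$ to be at least $L$. This needs aperiodicity, which ergodicity on a nonatomic space gives; if $\mu$ has an atom, ergodicity forces $X$ to be a single periodic orbit, and we treat that case separately, or exclude it as in the intended aperiodic setting. The naive choice $S=C\cap G$ has return times that are not controlled, because a point of $C\cap G$ may return to $C\cap G$ much later than $L$; that is fine. The real danger is the opposite one: the return time to $S$ exceeds $L$, but we still need $x\in G$. Since $S\subset G$, that condition holds automatically, and the bound $\delta n$ applies for all $n\geq L$ by the choice of $G$. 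So the key points are these:
\begin{itemize}
\item[(i)] $S\subset G$;
\item[(ii)] $S\cap T^jS=\emptyset$ for $1\le j<L$;
\item[(iii)] $\mu(S)>0$.
\end{itemize}
Setting $S=C\cap G$ gives (i) and (ii) immediately, since $T^jS\subset T^jC$ and these sets are disjoint from $C$. For (iii), $\mu(C)\geq (1-\epsilon)/L$, so if $\eta<(1-\epsilon)/L$ then $\mu(C\cap G)\ge\mu(C)-\eta>0$.

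The main obstacle is the order of quantifiers. $L$ depends on $G$ through Egorov, while we need $\eta<\mu(C)\approx 1/L$. To resolve this, we first fix $L_0$ with $\delta L_0>M$. Then we choose $\eta$ and $L$ from Egorov, with $L$ possibly large. We then drop Egorov for the height and use a tower of height $L$ where $L$ is chosen before $\eta$. Concretely, we define $G_L=\{x:\forall n\ge L,\ \forall i,\ |\{x,\ldots,T^{n-1}x\}\cap P_i|\ge\delta n\}$. This set increases to full measure as $L\to\infty$, so $\mu(G_L)\to 1$. We need an $L$ with $\mu(G_L)>1-(1-\epsilon)/L$, and this may fail: $\mu(X\setminus G_L)$ need not decay faster than $1/L$. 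To avoid this we do not rely on a Rokhlin tower. Instead we directly take $S=G_L\cap D$, where $D$ is any set of positive measure whose first $L$ translates are disjoint. To ensure $\mu(G_L\cap D)>0$, we apply the Rokhlin tower to the induced system on $G_L$. Let $C\subset G_L$ be a base for the induced map $T_{G_L}$, of height $L$. Its $T$-translates $T^jC$ for $1\le j<L$ avoid $C$, since returning to $C\subset G_L$ requires at least $L$ induced steps, each of which is at least one $T$-step. So $S=C$ works, with $\mu(C)>0$ because $T_{G_L}$ is aperiodic.
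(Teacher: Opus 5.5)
Your final argument is correct and is essentially the paper's (see its proof of the aperiodic analogue, \cref{lem: aperiodic tower 1}): take a positive-measure set of points whose first $L$ iterates hit every $P_i$ more than $M$ times, and shrink it to a positive-measure subset with $L$ pairwise disjoint translates, so that all return times are at least $L$. You obtain that subset from a Rokhlin tower for the induced map on $G_L$ where the paper quotes Halmos's lemma, and your bound $\delta n$ for all $n\ge L$ is more than is needed. Visit counts are nondecreasing in $n$, so the fixed-window set $\{x:\ \min_i|\{x,\ldots,T^{L-1}x\}\cap P_i|>M\}$ already suffices. One caution: aperiodicity is genuinely required (on a single cycle of $p$ points the statement fails once $M\ge p$), so that case must be excluded rather than ``treated separately''.
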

A subset $S\in \mathcal{B}$ such that $\cup_{n=1}^\infty T^{-n}S=X$ modulo $\mu$ is called an \textbf{exhaustive set for $T$}. Note that when $T$ is ergodic, every set is exhaustive. Given an exhaustive set $S$ for $T$ and $l\in\N$ we write $S_l:=\{x\in S: n(x)=l\}$, where $n(x)$ for $x \in S$ is the return time to $S$. Note that $X$ can be represented as a (Kakutani) tower over $S$ via
\[
X=\bigsqcup_{l=1}^\infty \bigsqcup_{j=0}^{l-1}T^{-j}S_l\ \mathrm{mod}\mu.
\]
The set $S$ is called \textbf{the base of the tower}. 

Using \cref{lem: ergodic theorem Kalikow} the authors in \cite{MR3373823} deduce the following. 
\begin{cor} \cite[Corollary 2.3]{MR3373823}\label{Cor: ergodic Kalikow}
Let $(X,\mathcal{B},\mu,T)$ be an ergodic probability preserving transformation. Let $M\in\N$ and let $\mathcal{P}=\left\{P_1,\ldots,P_t\right\}$, a finite partition such that  $\mu\left(P_i\right)>0$ for all $i$. There exists a Kakutani tower with base $S=\bigsqcup_{j=Mt}^\infty S_j$ such that for every $x\in S_j$, 
\[
\left|\left\{x,Tx,\ldots,T^{j-1}x\right\}\cap P_i\right|>M.
\]

\end{cor}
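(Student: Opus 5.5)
We deduce the Corollary from Lemma~\ref{lem: ergodic theorem Kalikow}. First apply the Lemma with $M$ as given to obtain a set $S\in\mathcal{B}$ of positive measure such that for every $x\in S$ with return time $n(x)=n$ to $S$, each $P_i$ is visited more than $M$ times along $x,Tx,\ldots,T^{n-1}x$. Since $T$ is ergodic and $\mu(S)>0$, the set $S$ is exhaustive, i.e.\ $\bigcup_{n\ge1}T^{-n}S=X$ mod $\mu$. Hence $X$ admits the Kakutani tower representation over $S$ displayed above, with $S=\bigsqcup_{l\ge1}S_l$ mod $\mu$. By Poincar\'e recurrence the return time is finite a.e.\ on $S$, so these $S_l$ exhaust $S$.

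Next we show that $S_j=\emptyset$ (mod $\mu$) for $j<Mt$, so that $S=\bigsqcup_{j\ge Mt}S_j$. Take $x\in S_j$. The $j$ points $x,\ldots,T^{j-1}x$ lie in the pairwise disjoint atoms $P_1,\ldots,P_t$, and by the Lemma each atom contains more than $M$ of them. Since $\mathcal{P}$ is a partition, counting gives
\[
j=\sum_{i=1}^t\left|\left\{x,\ldots,T^{j-1}x\right\}\cap P_i\right|\ge t(M+1)>Mt.
\]
Thus $S_j$ is empty for every $j\le Mt$, and in particular for $j<Mt$. The only care needed is that the partition property and the Lemma's conclusion hold almost everywhere; we discard a null set, which does not change the tower mod $\mu$.

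Finally, for $x\in S_j$ the return time is exactly $j$, so the Lemma's conclusion for $x$ reads $\left|\{x,\ldots,T^{j-1}x\}\cap P_i\right|>M$ for every $i$, which is the required statement.

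There is no real obstacle. The only point requiring ergodicity is that a positive-measure $S$ is exhaustive, which is what lets it serve as a Kakutani base for all of $X$. This is precisely the step that fails in the non-ergodic setting and that Lemma~\ref{lem: Aperiodic partittion} is later designed to circumvent.
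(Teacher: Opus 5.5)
Your deduction is correct and is the intended one. Ergodicity makes the positive-measure set from \Cref{lem: ergodic theorem Kalikow} exhaustive, hence a Kakutani base, and the restriction to return times $j\geq Mt$ follows by counting; the paper's non-ergodic analogue, \Cref{cor: Aperiodic tower}, changes exactly the exhaustiveness step, replacing it with a maximality (Zorn) argument, as you anticipated. (Your equality $j=\sum_{i=1}^t|\{x,\ldots,T^{j-1}x\}\cap P_i|$ uses that $x,\ldots,T^{j-1}x$ are distinct, which holds because $T^kx=T^lx$ with $0\leq k<l<j$ would give $T^{l-k}x=x\in S$, an earlier return; in any case the inequality $j\geq\sum_i|\{x,\ldots,T^{j-1}x\}\cap P_i|$ is all you need.)
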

With this Corollary at hand and a very clever combinatorial argument \cite[Theorem 1]{MR3373823}, which is \cref{Thm: Aperiodic} for ergodic $T$, is deduced. In \cite{MR3941195}, starting with \cite[Theorem 1]{MR3373823}, the following refinement is proved.  

\begin{thm}\cite{MR3941195}\label{Thm: Ergodic McCutcheon et al.} Let $(X,\mu, \B, T)$ be an ergodic, aperiodic probability preserving $\Z$ action. For any $N \in \N$ and a finite measurable partition $\PP = \{P_1,P_2,\ldots,P_t\}$ of $X$. There exist $B_N, B_{N+1} \in \B$ such that

\begin{enumerate}[label=(\alph*)]
\item $\{T^j B_t :  0\leq j < t, t\in \{N,N+1\}\}$ is a partition of $X$ up-to measure zero.
\item $B_N$ and $B_{N+1}$ are independent of $\mathcal{P}$.
\item $\mu\left(\bigsqcup_{j=0}^{N-1}T^j B_N\right)=\mu\left(\bigsqcup_{j=0}^{N}T^j B_{N+1}\right)=\frac{1}{2}$.
\end{enumerate}

\end{thm}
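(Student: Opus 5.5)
The plan is to derive \cref{Thm: Ergodic McCutcheon et al.} from the ergodic independent Alpern lemma \cite[Theorem 1]{MR3373823} (\cref{Thm: Aperiodic} for ergodic $T$). I would apply that lemma at a much larger height $M$, using a refined partition. Then I would cut each tall column into blocks of heights $N$ and $N+1$, splitting the bases in suitable proportions so that the total mass of the $N$-blocks is exactly $\tfrac12$.

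\textbf{Step 1: tall towers with independent levels.} Fix $k$ large (to be chosen in Step 3) and set $M=N(N+1)k$. Let
\[
\mathcal{Q}=\bigvee_{j=0}^{M}T^{-j}\PP.
\]
Apply \cite[Theorem 1]{MR3373823} with $\mathcal{Q}$ in place of $\PP$. This gives $C_M, C_{M+1}$ whose towers of heights $M$ and $M+1$ partition $X$, with $C_M, C_{M+1}$ independent of $\mathcal{Q}$. Since $T^{-j}P$ is a union of atoms of $\mathcal{Q}$ for $0\le j\le M$, we get $\mu(T^jC_h\cap P)=\mu(C_h\cap T^{-j}P)=\mu(C_h)\mu(P)$. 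So every level $T^jC_h$ is independent of $\PP$.

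\textbf{Step 2: splitting bases independently.} Since $T$ is ergodic and aperiodic, $\mu$ is nonatomic. Hence for any $\lambda\in[0,1]$ and each atom $q\in\mathcal{Q}$, choose $D\subset C_h\cap q$ with $\mu(D)=\lambda\mu(C_h\cap q)$. The union $C_h^\lambda$ of these sets satisfies $\mu(C_h^\lambda\cap q)=\lambda\mu(C_h\cap q)=\lambda\mu(C_h)\mu(q)$, so it is independent of $\mathcal{Q}$, and so is $C_h\setminus C_h^\lambda$. By Step 1 applied to these sets, all their $T^j$-images for $j\le M$ are independent of $\PP$.

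\textbf{Step 3: regrouping.} Write each height $h\in\{M,M+1\}$ in two ways as $h=a N+b(N+1)$ with $a,b\ge0$: one decomposition with $N$-block fraction $aN/h\ge\tfrac12$, and one with fraction $\le\tfrac12$.
\begin{itemize}
\item For $h=M$, take all $N$-blocks (fraction $1$) or all $(N+1)$-blocks (fraction $0$).
\item For $h=M+1$, take $b\equiv 1 \pmod N$ with $b\le N$, giving fraction $\ge 1-N(N+1)/(M+1)>\tfrac12$ once $k\geq 2$.
\item Alternatively for $h=M+1$, take $a\le N$ with $a\equiv -1\pmod{N+1}$, giving fraction $\le N^2/(M+1)<\tfrac12$.
\end{itemize}
Over $C_h^\lambda$ use the first decomposition and over $C_h\setminus C_h^\lambda$ the second; over $C_M$ and $C_{M+1}$ use the same parameter $\lambda$. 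Each column is thus cut into consecutive blocks. Let $B_N$ (resp.\ $B_{N+1}$) be the union of the levels at which an $N$-block (resp.\ $(N+1)$-block) starts. These are finite disjoint unions of sets independent of $\PP$, hence independent of $\PP$. The towers over $B_N$ and $B_{N+1}$ partition $X$, which gives (a) and (b).

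The $N$-tower mass is affine in $\lambda$. At $\lambda=1$ it is $\ge\tfrac12$ and at $\lambda=0$ it is $\le\tfrac12$, since it is a convex combination of the column fractions. By the intermediate value theorem some $\lambda$ gives exactly $\tfrac12$, which is (c).

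The step I expect to need the most care is Step 2. Independence must be preserved under splitting, which is why the splitting is done atom-by-atom with respect to the refined partition $\mathcal{Q}$ rather than $\PP$. This ensures independence of $\PP$ is retained at every shifted level, not only at the base. Once that is in place, Step 3 is elementary arithmetic.
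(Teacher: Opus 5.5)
Your argument is correct. The points that need checking all hold:
\begin{itemize}
\item Refining to $\mathcal{Q}=\bigvee_{j=0}^{M}T^{-j}\PP$ makes every level $T^jC_h$ with $0\le j\le h-1\le M$ independent of $\PP$, and the atomwise splitting of each $C_h$ preserves this.
\item The regroupings $M+1=\bigl((N+1)k-1\bigr)N+1\cdot(N+1)$ and $M+1=N\cdot N+(Nk-N+1)(N+1)$ are valid, with $N$-fractions above and below $\tfrac12$ once $k\ge 2$.
\item The $N$-tower mass is a convex combination of the column fractions, with weights $h\mu(C_h^\lambda)$ and $h\mu(C_h\setminus C_h^\lambda)$ summing to $1$. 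It is therefore affine in $\lambda$, so some $\lambda\in[0,1]$ gives exactly $\tfrac12$.
\end{itemize}
The paper gives no proof of this statement. It cites \cite{MR3941195} and remarks only that it follows from \cite[Theorem 1]{MR3373823} by a direct argument. Your proposal supplies such a direct deduction, using the independent Alpern lemma purely as a black box together with nonatomicity of $\mu$.
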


It is important to note that \Cref{Cor: ergodic Kalikow} is the only ergodic theoretic input in the proof of \cite[Theorem 1.2]{MR3373823} and that the deduction of \cref{Thm: Ergodic McCutcheon et al.} from \cite[Theorem 1.2]{MR3373823} is via a direct argument. We deduce from this the following convenient summary or reformulation. For $t\in\N$, we will use the shorthand notation $[t]:=\{1,\ldots,t\}$.

\begin{fact}\label{fact: Kalikow}
Let $(X,\mathcal{B},\mu,T)$ be a probability preserving transformation and $\mathcal{P}=\left\{P_1,\ldots,P_t\right\}$, a finite partition such that  $\mu\left(P_i\right)>0$ for all $i$. If for all $M\in\N$, there exists a Kakutani tower for $T$ with base $S=\bigsqcup_{j=Mt}^\infty S_j$ such that for every $x\in S_j$, 
\[
\min_{i\in [t]}\left|\left\{x,Tx,\ldots,T^{j-1}x\right\}\cap P_i\right|>M.
\]
then there exists $B_N, B_{N+1} \in \B$ such that
\begin{itemize}
\item[(a)] $
\{T^j B_t :  0\leq j < t, t\in \{N,N+1\}\}$ 
is a partition of $X$ up-to measure zero.
\item[(b)] $B_N$ and $B_{N+1}$ are independent of $\mathcal{P}$.
\item[(c)] $\mu\left(\bigsqcup_{j=0}^{N-1}T^j B_N\right)=\mu\left(\bigsqcup_{j=0}^{N}T^j B_{N+1}\right)=\frac{1}{2}$.
\end{itemize}
\end{fact}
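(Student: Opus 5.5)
This Fact is a reformulation rather than a new theorem, so I would not prove it from scratch. I would audit the proofs of \cite[Theorem 1.2]{MR3373823} and of \cref{Thm: Ergodic McCutcheon et al.} in \cite{MR3941195}, and check that ergodicity enters only through the existence of the Kakutani towers of \Cref{Cor: ergodic Kalikow}.

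The order of steps is as follows. First, fix $N$. The argument of \cite{MR3373823} chooses $M$ large in terms of $N$ and $t$ and takes the base $S=\bigsqcup_{j\ge Mt}S_j$ supplied by \Cref{Cor: ergodic Kalikow}. The hypothesis of the Fact hands us exactly such a tower for every $M$, with the same two properties: every column has height at least $Mt$, and every orbit segment $x,\dots,T^{j-1}x$ with $x\in S_j$ meets each $P_i$ more than $M$ times. Second, the combinatorial step of \cite{MR3373823} works column by column. Each level set $S_j$ is refined further according to the $\PP$-name $(\PP(x),\dots,\PP(T^{j-1}x))$, giving countably many subcolumns. In each subcolumn one chooses positions, depending only on the name, that cut the column into blocks of lengths $N$ and $N+1$. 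The choice is made so that, within that subcolumn, the chosen base points are equidistributed across the atoms $P_i$ in proportion to $\mu(P_i)$. The room to do this comes from each name containing more than $M$ occurrences of every symbol, and a height at least $Mt$, which is large enough to be written as a combination of $N$'s and $N+1$'s with slack. This step uses only the Kakutani tower structure, the measure-preservation of $T$ between levels, and the counting property, and each of these is available without ergodicity.

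Third, summing over subcolumns gives $\mu(B_h\cap P_i)=\mu(B_h)\mu(P_i)$ for $h\in\{N,N+1\}$, which is (b). Since the blocks tile every column, (a) follows. Fourth, the passage from \cite[Theorem 1.2]{MR3373823} to \cref{Thm: Ergodic McCutcheon et al.} in \cite{MR3941195} must be checked to be a direct manipulation of the towers produced, such as re-cutting blocks or combining two such constructions with different $M$. If so, (c) follows verbatim.

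The main obstacle is the auditing in step two and step four. Somewhere the original arguments may use ergodicity implicitly, for instance by asserting that some derived set has positive measure or is exhaustive, or by invoking the ergodic theorem to get frequencies close to $\mu(P_i)$ uniformly. If the equidistribution uses frequencies along the name rather than the global values $\mu(P_i)$, I would fix it as follows. Use the counting hypothesis only to guarantee that each symbol has enough occurrences to shift the cut positions. Then achieve the global proportions $\mu(P_i)$ by a measure-splitting device: subdivide subcolumns by measure, which is possible in a nonatomic part of the space, and choose different cut patterns on the pieces. This is a convex-combination argument that requires no ergodicity.
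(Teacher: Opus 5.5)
Your top-level route is the same as the paper's. The paper gives no separate argument for this Fact beyond noting that \Cref{Cor: ergodic Kalikow} is the only ergodic input in \cite{MR3373823}, and that (c) is then obtained in \cite{MR3941195} by a direct argument; that is the audit you propose. The problem is the mechanism you reconstruct in your second step, and the fallback in your last paragraph. It is impossible in general to make the base inside each $\PP$-name subcolumn carry a fraction $\mu(P_i)$ of its mass in $P_i$, and splitting subcolumns by measure cannot repair this. Take one cut pattern of a column of height $j$ whose name contains the symbol $i$ exactly $k_i$ times. It has at least $j/(N+1)$ base levels, and at most $k_i$ of them lie in $P_i$. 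Hence every convex combination of patterns puts at most a fraction $(N+1)k_i/j$ of that column's base mass into $P_i$. Now take the kind of system the Fact is applied to in the proof of \cref{Thm: Aperiodic}: $X=X_1\sqcup X_2$ with $X_1,X_2$ invariant of measure $\frac12$, $t=2$, $\mu(P_1\cap X_1)=0.495$ and $\mu(P_1\cap X_2)=0.005$. Then $\mu(P_1)=\frac12$ and $\mathscr{K}=\{\{1,2\}\}$, so the Fact is applied to $X$ itself. Since $X_2$ is invariant, the column over $S_w\cap X_2$ has the same name $w$. Summing over subcolumns, the average of $k_1(w)/j_w$ with weights $j_w\,\mu(S_w\cap X_2)$ equals $0.01$. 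So some subcolumn of positive measure has $k_1/j\le 0.01$. In it the base share in $P_1$ is at most $0.01(N+1)<\frac12$ once $N\le 48$; for larger $N$, make the example more lopsided. This holds for every tower satisfying the hypothesis.

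What your third step actually needs is a condition relative to each column's own name, for example
\[
\mu\left(B_h\cap P_i\cap C\right)=\frac{1}{2h}\,\mu\left(P_i\cap C\right)\qquad\text{for every subcolumn } C,\ i\in[t],\ h\in\{N,N+1\},
\]
with the constant independent of $C$. Summing over $C$ gives $\mu(B_h\cap P_i)=\frac{1}{2h}\mu(P_i)$ and $\mu(B_h)=\frac{1}{2h}$, hence (b) and (c). This uses only the identity $\sum_C\mu(P_i\cap C)=\mu(P_i)$, which holds in every system; more generally, any balancing that uses only this identity needs no ergodicity. The counting hypothesis is presumably what supplies, in every column, enough occurrences of each symbol to absorb the forced boundary effects (in a column-by-column cutting, the bottom level is always a base level). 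So your worry points the wrong way: equidistributing according to the frequencies along the name is exactly what makes the argument work without ergodicity. Matching the global $\mu(P_i)$ column by column cannot be done in general. It is not available even in the ergodic case, because \Cref{Cor: ergodic Kalikow} gives no control over frequencies in individual columns. With the target corrected, your audit is the paper's justification.
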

\subsection{Proof of \Cref{Thm: Aperiodic}}

In \cite{MR3941195} it is remarked that \Cref{Thm: Aperiodic} can be deduced from the ergodic case by using the ergodic decomposition. While it seems very plausible at first glance, there are several issues which come up. The first is that if $\mathcal{P}$ is a partition of the space $X$, it could be that some of its atoms degenerate on a positive measure set of ergodic components. Another issue is that even if we have only two ergodic components and two Alpern towers independent of $\mathcal{P}$, their union may not be independent of $\mathcal{P}$. The final one, is that as in many cases the space of ergodic components is uncountable, it is not clear how to take an uncountable union of Alpern towers.

The first step in our proof is to divide the space into invariant subsets according to which parts of the partition are present. From now on we consider $(X,\mathcal{B},\mu,T)$ a fixed aperiodic dynamical system and $\mathcal{P}=\{P_1,\dots,P_t\}$ a fixed measurable partition, each of its elements of positive $\mu$ measure. Write $\mathcal{I}\subset\mathcal{B}$ for the $\sigma$ algebra of $T$ invariant  subsets and define $\varphi:X\to [0,1]^t$, a $\mathcal{I}$ measurable function, via
\[
\varphi(x):=\left(\mathbb{E}\left(\left. 1_{P_1}\right|\mathcal{I}\right)(x),\ldots,\mathbb{E}\left(\left. 1_{P_t}\right|\mathcal{I}\right)(x)\right)
\]
For a subset $F\subset [t]$, write 
\[
X_F:=\left\{x\in X:\ \varphi(x)_j\neq 0\ \text{if and only if}\ j\in F\right\}.
\]
Write $\mathscr{K}\subset 2^{[t]}\setminus\emptyset$ for the collection of all $F$'s such that $\mu\left(X_F\right)>0$.
\begin{lemma}\label{lem: Aperiodic partittion} 
$\left\{X_F:\ F\in\mathscr{K}\right\}$ is a partition of $X$ to $T$ invariant sets so that for each $ F\in\mathscr{K}$ the following properties are satisfied: 
\begin{itemize}
    \item For all $j\in [t]$, $\mu\left(X_F\cap P_j\right)>0$ if and only if $j\in F$. Consequently $\mathcal{P}_F:=\left\{X_F\cap P_j:\ j\in F\right\}$ is a partition of $X_F$.
    \item For $\mu$ almost all $x\in X_F$, for all $j\in F$, 
    \[
    \lim_{n\to\infty}\frac{\left|\left\{x,Tx,\ldots,T^{n-1}x\right\}\cap \left(X_F\cap P_j\right)\right|}{n}>0.
    \]
\end{itemize}
\end{lemma}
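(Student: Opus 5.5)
The plan is to work with a fixed version of the conditional expectations and derive everything from the Birkhoff ergodic theorem and the defining property of conditional expectation on $\mathcal{I}$.

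\textbf{Step 1: the $X_F$ form an invariant partition.} Choose an $\mathcal{I}$-measurable version of $\varphi$. Every $x$ determines a unique set $F(x)=\{j:\varphi(x)_j\neq 0\}$. Hence the sets $X_F$, $F\subset[t]$, are pairwise disjoint and cover $X$.

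The set $X_\emptyset$ is null. Indeed, $\sum_j \mathbb{E}(1_{P_j}|\mathcal{I})=\mathbb{E}(1|\mathcal{I})=1$ a.e., so $\varphi(x)=0$ occurs only on a null set. Discarding the null sets $X_F$ with $F\notin\mathscr{K}$ then gives a partition mod $\mu$.

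Each $X_F$ is defined through the $\mathcal{I}$-measurable function $\varphi$, so $X_F\in\mathcal{I}$ up to null sets. If strict invariance is wanted, I would replace $X_F$ by $\bigcap_{n\in\Z}T^nX_F$, which differs by a null set.

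\textbf{Step 2: the first bullet.} Since $X_F\in\mathcal{I}$, the defining property of conditional expectation gives
\[
\mu(X_F\cap P_j)=\int_{X_F}\mathbb{E}(1_{P_j}|\mathcal{I})\,d\mu .
\]
For $j\notin F$ the integrand vanishes on $X_F$, so the integral is $0$. For $j\in F$ the integrand is strictly positive on $X_F$, and $\mu(X_F)>0$, so the integral is positive.

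Consequently $\{X_F\cap P_j : j\in F\}$ covers $X_F$ up to the null set $\bigcup_{j\notin F}X_F\cap P_j$. It is therefore a partition of $X_F$ mod $\mu$, which is all that is meant.

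\textbf{Step 3: the second bullet.} By the Birkhoff ergodic theorem,
\[
\frac1n\bigl|\{x,\dots,T^{n-1}x\}\cap P_j\bigr|\to\mathbb{E}(1_{P_j}|\mathcal{I})(x)\quad\text{a.e.}
\]
For $x\in X_F$, invariance gives $T^kx\in X_F$ for all $k$. So $T^kx\in P_j$ if and only if $T^kx\in X_F\cap P_j$, and the counts with $P_j$ and with $X_F\cap P_j$ coincide. The limit is therefore $\varphi(x)_j$, which is strictly positive for $j\in F$ by the definition of $X_F$.

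\textbf{Main obstacle.} The only delicate point is the bookkeeping of versions and null sets. One must intersect the full-measure Birkhoff set with the invariant modifications of the $X_F$, so that the statements hold for almost every $x$ simultaneously for all $j$. Since there are finitely many $F$ and $j$, this requires only a finite union of null sets.
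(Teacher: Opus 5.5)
Your proposal is correct and follows the paper's proof essentially step for step. Disjointness and invariance of the $X_F$ come from the $\mathcal{I}$-measurability of $\varphi$, the first bullet from $\mu(X_F\cap P_j)=\int_{X_F}\mathbb{E}(1_{P_j}|\mathcal{I})\,d\mu$, and the second bullet from the pointwise ergodic theorem. You also supply two details the paper leaves implicit: that $X_\emptyset$ is null because $\sum_j\varphi_j=1$ a.e., and that for $x\in X_F$ the visit counts to $P_j$ and to $X_F\cap P_j$ coincide by invariance.
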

\begin{proof} For almost every $x\in X$, $F_x:=\{j\in[t]:\varphi(x)_j\neq 0\}$ is a nonempty subset of $[t]$, thus we have $x\in X_{F_x}$. Let $F,G \in\mathscr{K}$ such that $F\neq G$. Then without loss of generality there exists $j \in F$ such that $j \notin G$. Thus $\varphi(x)_j\neq 0$ for a.e $x\in X_F$, while $\varphi(x)_j=0$ for a.e $x\in X_G$, hence $X_F\cap X_G=\emptyset$. This shows that $\{X_F:F\in\mathscr K\}$ is a partition of $X$. The sets $X_F$ are $T$ invariant because $\varphi$ is $\mathcal{I}$ measurable  and $$X_F:=\bigcap_{j\in F}\{\varphi(x)_j\neq 0\}\cap \bigcap_{j\notin F}\{\varphi(x)_j=0\}.$$

Now fix $ F\in \mathscr{K}$ and note that for all $j\in [t]$, 
\begin{align*}
\mu\left(X_F\cap P_j\right)&= \int 1_{X_F}1_{P_j}d\mu,\\
&= \int \mathbb{E}\left(\left.1_{X_F}1_{P_j}\right|\mathcal{I}\right)d\mu=\int_{X_F} \mathbb{E}\left(\left.1_{P_j}\right|\mathcal{I}\right)d\mu, \ \text{since $1_{X_F}$ is $\mathcal{I}$-measurable.}
\end{align*}
The first item follows from this, the positivity of conditional expectation and the definition of $X_F$. The last claim follows from the pointwise ergodic theorem since for every $j\in F$, for $\mu$ almost every $x\in X_F$
\[
\lim_{n\to\infty}\frac{1}{n}\sum_{k=0}^{n-1}1_{P_j}(T^k x)=\mathbb{E}\left(\left. 1_{P_j}\right|\mathcal{I}\right)(x)=\varphi(x)_j>0.
\]\end{proof}
In view of \Cref{lem: Aperiodic partittion}, for each $F\in \mathscr{K}$ write $\B_F=X_F\cap \B$ and $\mu_F$ for the measure $\mu$ conditioned on $X_F$. Since $X_F$ is $T$ invariant, we note that $\left(X_F,\mathcal{B}_F,\mu_F,T\right)$ is an aperiodic dynamical system and $\mathcal{P}_F$ is a partition of $X_F$ to pieces, all of whose pieces have positive measure. We first prove an analogue of \cref{lem: ergodic theorem Kalikow} for these aperiodic subsystems.

\begin{lemma}\label{lem: aperiodic tower 1}
Let $F\in\mathscr{K}$ and $M\in\N$. For every $A\subset X_F$, a $T$ invariant set with $\mu_F(A)>0$, there exists $S\subset A$ with $\mu_F(S)>0$ such that for $\mu_F$ almost all $x\in S$ whose return time to $S$ is $n(x)$, 
\[
\min_{Q\in\mathcal{P}_F}\left|\left\{x,Tx,...,T^{n(x)-1}x\right\}\cap Q\right|>M.
\]
\end{lemma}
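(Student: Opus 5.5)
Given $F\in\mathscr{K}$, $M\in\N$ and a $T$ invariant $A\subset X_F$ with $\mu_F(A)>0$, the plan is to imitate the proof of \cref{lem: ergodic theorem Kalikow}. The uniform frequency bound coming from ergodicity is replaced by one obtained by restricting to a subset of $A$ on which the pointwise ergodic theorem holds uniformly. The base $S$ will be the set of points whose last return is far back in time, chosen as a Rokhlin-type base with long gaps.

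First, by \Cref{lem: Aperiodic partittion}, for $\mu$ almost every $x\in A$ and every $j\in F$ the frequency $\varphi(x)_j$ of visits to $Q_j:=X_F\cap P_j$ is positive. Choose $\delta>0$ and $L\in\N$ such that the set
\[
A'=\Bigl\{x\in A:\ \varphi(x)_j>2\delta\ \forall j\in F,\ \ \tfrac1n\bigl|\{x,\dots,T^{n-1}x\}\cap Q_j\bigr|>\delta\ \forall n\ge L,\ \forall j\in F\Bigr\}
\]
has positive measure. This is possible by Egorov's theorem, or simply by countable additivity applied to the increasing union over $\delta=1/k$ and $L$. Enlarging $L$, assume also $\delta L>M+1$. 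Since $\varphi$ is invariant, the set $A_\delta=\{x\in A:\varphi(x)_j>2\delta\ \forall j\}$ is $T$ invariant, and $A'\subset A_\delta$.

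Second, I need a base $S\subset A'$ of positive measure with all return times at least $L$. Then for $x\in S$ with $n(x)\ge L$, since $x\in A'$, each $Q_j$ is visited more than $\delta n(x)\ge \delta L>M$ times along $x,\dots,T^{n(x)-1}x$, which is exactly the required conclusion. To build $S$, apply the Rokhlin lemma to the aperiodic system $T$ restricted to the invariant set $A_\delta$ (or just the standard disjointification argument). This gives $C\subset A_\delta$ with $C,TC,\dots,T^{L-1}C$ pairwise disjoint and covering all but a small proportion of $A_\delta$. Consequently some translate $T^kC\cap A'$, $0\le k<L$, has positive measure. Set $S=T^kC\cap A'$. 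Its points have return times to $S$ at least $L$, because $T^kC$ already has that property.

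Third, to justify the step above, note that if every $T^kC\cap A'$ were null then $A'$ would sit inside the error set. Taking the Rokhlin error smaller than $\mu(A')$ rules this out. Return times are finite almost everywhere by Poincaré recurrence, so $n(x)$ is well defined.

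The main obstacle is the first step: obtaining a uniform $L$ that works for all $n\ge L$ simultaneously on a positive-measure set. This is where ergodicity was previously used, and it is handled by the set-approximation argument. Everything else is the routine Rokhlin-gap trick.
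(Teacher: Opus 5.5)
Your proof is correct and follows essentially the paper's route: first find a positive-measure subset of $A$ on which an orbit segment of one fixed length already meets every $Q\in\mathcal{P}_F$ more than $M$ times, then pass to a positive-measure subset of it whose return times are at least that length, using the Rokhlin/Kakutani gap lemma for aperiodic maps. The one difference is that you ask for a uniform frequency bound for all $n\ge L$, which is more than needed. Visit counts are monotone in the segment length, so the paper just uses $A_K=\{x\in A:\min_{Q\in\mathcal{P}_F}\sum_{j<K}1_Q\circ T^j(x)>M\}\uparrow A$, and the step you flag as the main obstacle is not really one.
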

The proof of \cref{lem: aperiodic tower 1} is essentially the same as the proof of \cite[Lemma 2.2]{MR3373823} where we use the properties of $X_F$ from \cref{lem: Aperiodic partittion} instead of the ergodic theorem and the choice of the set $S$ in its final part needs a slightly more subtle input which is a part of the proof of the Rokhlin Lemma for aperiodic actions. 
\begin{proof}
Define for $K\in\N$,
\[
A_K:=\left\{x\in A:\ \min_{Q\in\mathcal{P}_F}\sum_{j=0}^{K-1}1_Q\circ T^j(x)> M\right\}
\]
By \Cref{lem: Aperiodic partittion}, $A_K\uparrow A$ (mod $\mu$ null sets) and consequently there exists $K\in \N$ such that the set $A_K$ is of positive $\mu$ measure. Note that for all $x\in A_K$ and every $Q\in\mathcal{P}_F$,
\[
\left|\left\{x,\ldots,T^{K-1}x\right\}\cap Q\right|>M.
\]
 By \cite[Lemma 1]{MR9703} (see also the proof of the Rokhlin lemma in \cite{MR97489}) there exists $S\subset A_K$ of positive $\mu$ measure such that for all $1\leq j\leq K-1$, $S\cap T^{j}S=\emptyset$, in other words for all $x\in S$ the return time to $S$ is larger or equal to $K$. The set $S$ is the desired set. 
\end{proof}
For a set $B\in\B$ we write $[B]_T:=\cup_{n\in\Z}T^nB$ for the $T$ saturation of $B$. 
\begin{cor}\label{cor: Aperiodic tower}
Let $F\in\mathscr{K}$. For all  $M\in\N$, there exists a Kakutani tower for $\left(X_F,\mathcal{B}_F,\mu_F,T\right)$ with base $S=\bigsqcup_{j=Mt}^\infty S_j$ such that for every $x\in S_j$, 
\[
\min_{Q\in\mathcal{P}_F}\left|\left\{x,Tx,\ldots,T^{j-1}x\right\}\cap Q\right|>M.
\]
\end{cor}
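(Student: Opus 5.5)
The plan is an exhaustion argument built on \cref{lem: aperiodic tower 1}. That lemma only produces a set $S\subset A$ of positive measure inside a given invariant set $A$. Such an $S$ need not be exhaustive for $T$ on $X_F$: its saturation $[S]_T$ may be a proper invariant subset. I would therefore repeat the construction on invariant pieces and glue the resulting sets.

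Fix $F\in\mathscr{K}$ and $M\in\N$. Without loss of generality $M\ge$ something harmless, and I will apply \cref{lem: aperiodic tower 1} with $M$ replaced by $M':=\max(M, Mt)$. A point $x$ with more than $M'\ge M$ visits to each of the $|F|\ge 1$ atoms of $\mathcal{P}_F$ along its return orbit must have return time $n(x)>M'\ge Mt$. This gives $S=\bigsqcup_{j\geq Mt}S_j$ automatically.

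The exhaustion runs as follows. Let $\alpha:=\sup\{\mu_F([S]_T)\}$, where $S$ ranges over the \emph{admissible} sets. A set $S\subset X_F$ is admissible if it is a countable disjoint union $\bigsqcup_k S^{(k)}$ with the following properties:
\begin{itemize}
\item the saturations $[S^{(k)}]_T$ are pairwise disjoint;
\item each $S^{(k)}$ has the property of \cref{lem: aperiodic tower 1} with constant $M'$.
\end{itemize}
I would choose admissible sets $S$ with $\mu_F([S]_T)\to\alpha$ and build them increasingly. At each stage, the next piece is obtained by applying \cref{lem: aperiodic tower 1} to the invariant set $A=X_F\setminus [S]_T$, provided this set has positive measure. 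A cleaner way to do this is transfinite-free greedy selection: at each step pick a piece whose saturation has measure at least half the supremum available. Then the saturations of the chosen pieces exhaust $X_F$ up to a null set. Otherwise the remainder $A$ would be an invariant set of positive measure, and the lemma would give a new piece inside it, contradicting the greedy choice.

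Setting $S:=\bigsqcup_k S^{(k)}$ gives a set with $[S]_T=X_F$ mod $\mu_F$, so $S$ is exhaustive. The key check is the return-time property. For $x\in S^{(k)}$, the orbit of $x$ stays inside the invariant set $[S^{(k)}]_T$, which is disjoint from the other pieces. Hence the return time of $x$ to $S$ equals its return time to $S^{(k)}$, and the counting property transfers. By Poincar\'e recurrence, return times are finite a.e., so the Kakutani tower representation over $S$ holds. Defining $S_j$ as the set of points of return time $j$ gives the conclusion with $j\ge Mt$.

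I expect the main obstacle to be making the exhaustion rigorous, specifically ensuring that the countable greedy procedure really exhausts $X_F$. The standard argument handles this. The measures $\mu_F([S^{(k)}]_T)$ are summable, so the greedy lower bounds tend to $0$. If a positive-measure invariant remainder $A$ persisted, the lemma applied to $A$ would yield a piece whose saturation has fixed positive measure. That piece was available at every stage, contradicting the greedy choice eventually.
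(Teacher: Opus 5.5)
Your proposal is correct and has the same structure as the paper's proof. You glue countably many pieces obtained from \cref{lem: aperiodic tower 1} on invariant remainders, with pairwise disjoint saturations, so that the return time of $x\in S^{(k)}$ to the union equals its return time to $S^{(k)}$.

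The difference is only in how the exhaustion is organised. The paper takes a maximal family with disjoint saturations via Zorn's lemma. It gets countability because pairwise disjoint saturations of positive measure can only be countably many. It gets full saturation because a positive-measure remainder would let the family be enlarged. You instead run a countable greedy selection, in which each new piece has saturation of measure at least half the supremum available in the current remainder. You then rule out a positive-measure remainder because the measures $\mu_F([S^{(k)}]_T)$ are summable while such a remainder would bound the available supremum from below at every stage. Your route avoids Zorn's lemma and gives the enumeration directly, at the cost of slightly more bookkeeping. The paper's maximality argument is shorter and needs no quantitative choice. Your preliminary ``supremum $\alpha$, build increasingly'' sketch adds nothing and can be dropped in favour of the greedy rule.

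Replacing $M$ by $Mt$ is a small genuine improvement. It forces every return time to exceed $Mt$ even when $|F|<t$. The paper's proof does not address this explicitly: more than $M$ visits to each of the $|F|$ atoms of $\mathcal{P}_F$ only forces return times greater than $M|F|$.
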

\begin{proof}
By \cref{lem: aperiodic tower 1} there exists $C^{(1)}\subset X_F$ of positive measure such that for all $x\in C^{(1)}$ whose return time to $C^{(1)}$ is $n(x)$, 
\[
\min_{Q\in\mathcal{P}_F}\left|\left\{x,Tx,...,T^{n(x)-1}x\right\}\cap Q\right|>M.
\]

If $\left[C^{(1)}\right]_T$ is not of full, consider now $A^{(1)}:=X_F\setminus \left[C^{(1)}\right]_T$ and note that it is a $T$ invariant set. If $A^{(1)}$ is a null set, then $S^{(1)}=C^{(1)}$ is the desired base set. Otherwise applying \cref{lem: aperiodic tower 1}, once again we obtain $C^{(2)}\subset A^{(1)}$ of positive measure such that for all $x\in C^{(2)}$ whose return time to $C^{(2)}$ is $n(x)$, 
\[
\min_{Q\in\mathcal{P}_F}\left|\left\{x,Tx,...,T^{n(x)-1}x\right\}\cap Q\right|>M.
\]
Now write $S^{(2)}:=C^{(1)} \bigsqcup C^{(2)}$ and note that by the way we chose $C^{(1)}$ and $C^{(2)}$ the return time of any $x\in S^{(2)}$ is the same as its return time to $C^{(1)}$ or $C^{(2)}$ according to which set it belongs, so $n(x)$ is its return time to $S^{(2)}$. Again if $\left[S^{(2)}\right]_T$ is of full $\mu_F$ measure, we are done.

Continuing in this way, by Zorn's lemma we obtain a maximal collection of sets $\left(C^{(\alpha)}\right)_{\alpha\in \mathcal{A}}$ such that the following holds:
\begin{itemize}
    
    \item For all $\alpha\in \mathcal{A}$,  $C^{(\alpha)}$ is a measurable subset of $X_F$ and 
     \[
\mu_F\left( C^{(\alpha)}\right)>0.
\]
    
    \item For every $\alpha_1\neq \alpha_2$, the $T$ saturations of $C^{(\alpha_1)}$ and $C^{(\alpha_2)}$ are disjoint. 
    \item For all $\alpha\in \mathcal{A}$, for all $x\in C^{(\alpha)}$ whose return time to $C^{(\alpha)}$ is $n(x)$, 
\[
\min_{Q\in\mathcal{P}_F}\left|\left\{x,Tx,...,T^{n(x)-1}x\right\}\cap Q\right|>M.
\] 
\end{itemize} 
As the $T$ saturations of the $C^{(\alpha)}$'s are of positive $\mu$ measure and pairwise disjoint, we deduce that $\mathcal{A}$ is countable and write simply $\mathcal{A}=\N$ and $(C^{(n)})_n$. 

We claim that $S:=\cup_n C^{(n)}$ is the required base of a tower. Indeed, if $\mu_F\left([S]_T\right)<1$, we can find using \Cref{lem: aperiodic tower 1} a set $C'\in\mathcal{B}$ which satisfies $\mu_F(C')>0$, $C'\cap [S]_T=\emptyset$ and for almost every $x\in C'$ whose return time to $C'$ is $n(x)$ one has  
\[
\min_{Q\in\mathcal{P}_F}\left|\left\{x,Tx,...,T^{n(x)-1}x\right\}\cap Q\right|>M.
\]
As this is a contradiction for the maximality of the family, we deduce that $\mu_F([S]_T)=1$. In addition, for every $x\in S$ there exists a unique $n\in\N$ such that $x\in C^{(n)}$. As the $T$ saturations of the subsets in the maximal family are pairwise disjoint, we deduce that $n(x)$, the return time of $x$ to $S$ is the same as the return time of $x$ to $C^{(n)}$. This and the choice of $C^{(n)}$'s implies that for all $x\in S$ whose return time to $S$ is $n(x)$, one has 
\[
\min_{Q\in\mathcal{P}_F}\left|\left\{x,Tx,...,T^{n(x)-1}x\right\}\cap Q\right|>M,
\]
as desired. 
\end{proof}
\begin{proof}[Proof of \Cref{Thm: Aperiodic}.] 
Let $F\in\mathscr{K}$. By \Cref{cor: Aperiodic tower}, the aperiodic dynamical system $\left(X_F,\mathcal{B}_F,\mu_F,T\right)$ satisfies the conditions of \cref{fact: Kalikow} with respect to its partition $\mathcal{P}_F$. Consequently there exists $B_{N,F},B_{N+1,F}\subset X_F$ satisfying 
\begin{itemize}
\item[(a)] $
\{T^j B_{t,F} :  0\leq j < t, t\in \{N,N+1\}\}$ 
is a partition of $X_F$ up-to measure zero.
\item[(b)] $B_{N,F}$ and $B_{N+1,F}$ are independent of $\mathcal{P}_F$.
\item[(c)] $\mu_F\left(\bigsqcup_{j=0}^{N-1}T^j B_{N,F}\right)=\mu_{F}\left(\bigsqcup_{j=0}^{N}T^j B_{N+1,F}\right)=\frac{1}{2}$. As $T$ preserves $\mu_F$ this implies that for every  $t\in\{N,N+1\}$, $\mu_F\left(B_{t,F}\right)=\frac{1}{2t}$.
\end{itemize}
We claim that $B_N:=\bigsqcup_{F\in\mathscr{K}}B_{N,F}$ and $B_{N+1}:=\bigsqcup_{F\in\mathscr{K}}B_{N+1,F}$ form the bases of an $N,N+1$ Alpern tower independent of $\mathcal{P}$. 

The fact that $\{T^j B_{t} :  0\leq j < t, t\in \{N,N+1\}\}$ are pairwise disjoint follows from the fact that for each $F\in\mathscr{K}$, $\{T^j B_{t,F} :  0\leq j < t, t\in \{N,N+1\}$
are pairwise disjoint subsets of $X_F$ and that if $F_1,F_2\in\mathscr{K}$ are distinct then  $X_{F_1}\cap X_{F_2}=\emptyset$. To see that they form a partition of $X$ (mod $\mu$) note that \footnote{all equalities are modulo null sets}
\begin{align*}
\bigsqcup_{t\in \{N,N+1\}}\bigsqcup_{j=0}^{t-1}T^{j}B_t&=\bigsqcup_{F\in\mathscr{K}}\bigsqcup_{t\in \{N,N+1\}}\bigsqcup_{j=0}^{t-1}T^{j}B_{t,F}\\
&=\bigsqcup_{F\in\mathscr{K}}X_F=X.
\end{align*}
To complete the proof we need to show that both $B_N$ and $B_{N+1}$ are independent of $\PP$. Let $P_i \in \PP$, then
\[
\mu(P_i \cap B_N) = \sum_{F \in \mathscr{K}} \mu(P_i \cap B_N(F)).
\]
For all $F\in \mathscr{K}$ such that $i\notin F$, $\mu\left(P_i\cap X_F\right)=0$. In addition, if $i\in F$, then $P_i\cap X_F\in\mathcal{P}_F$. Combining these two insights we get,
\begin{align*}
\mu(P_i \cap B_N) &= \sum_{F\in\mathscr{K} : i \in F} \mu(X_F) \cdot \dfrac{\mu\left(P_i \cap B_{N,F}\right)}{\mu(X_F)}\\
&= \sum_{F\in\mathscr{K} : i \in F} \mu(X_F) \cdot \mu_F\left(\left(P_i\cap X_F\right) \cap B_{N,F}\right). 
\end{align*} 

Recall that $\PP_F$ is a partition of $X_F$ which is independent of $B_{N,F}$, hence we get
\begin{align}
    \mu(P_i \cap B_N) &= \sum_{F\in\mathscr{K} : i \in F} \mu(X_F) \cdot \mu_F\left(P_i\right) \mu_F\left(B_{N,F}\right)\nonumber\\
    &= \dfrac{1}{2N} \sum_{F\in\mathscr{K} : i \in F} \mu(X_F) \cdot \mu_F(P_i)\nonumber\\
    &=\dfrac{1}{2N} \sum_{F\in\mathscr{K} : i \in F} \mu(X_F\cap P_i)=\frac{\mu\left(P_i\right)}{2N}.\label{eq: 1}
\end{align} 
Here we used the fact that $\mu_F$ measure of the $B_{N,F}$  is $1/2N$.  A similar argument shows that
\begin{align}
\mu\left(B_N\right)&=\sum_{F\in\mathscr{K}}\mu\left(X_F\right)\mu_F\left(B_{N,F}\right)\nonumber\\
&=\frac{1}{2N}\sum_{F\in\mathscr{K}}\mu\left(X_F\right)=\frac{1}{2N}.\label{eq: 2}
\end{align}

We deduce from \eqref{eq: 1} and \eqref{eq: 2}  that for all $1\leq i\leq t$,
\[
\mu(P_i \cap B_N)=\mu\left(P_i\right)\mu\left(B_N\right),
\]
in other words, $B_N$ is independent of $\mathcal{P}$. A similar argument shows that $B_{N+1}$ is independent of $\PP$ as well. 
\end{proof}
\section{Application to limit theorems in deterministic dynamical systems}\label{sec: application to limit theorems}
The ergodic case of \cref{Thm: Aperiodic} was a main tool in the proof of \cite[Proposition 2]{Kosloff_Volny_2022} which is the main backbone of many recent results on limit theorems in deterministic systems \cite{Kosloff_Volny_2022,kosloffVolnyStableCLT,MR4955410}. With \cref{Thm: Aperiodic} we can remove the ergodicity assumption. 

\begin{prop}\label{prop: copying lemma}
Let $(X,\mathcal{B},\mu,T)$ be an aperiodic invertible probability measure preserving transformation and $\xi$ a finite measurable partition of $X$. For every finite set $A$ and $X_1, X_2, .., X_l$ a
collection of $A$ valued i.i.d. random variables, there exists $f : X \to A$ such that $(f\circ T^i)_{i=0}^{l-1}$ and $(X_i)_{i=1}^l$ are equally distributed and  $(f\circ T^i)_{i=0}^{l-1}$ is independent of $\xi$.     
\end{prop}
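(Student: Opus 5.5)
The plan is to follow the proof of \cite[Proposition 1]{Kosloff_Volny_2022}, replacing its single use of the ergodic independent Alpern lemma by \cref{Thm: Aperiodic}. Write $\nu$ for the common law of the $X_i$ on $A$, and $\nu^{\otimes n}$ for the product law on $A^n$. Since only the joint law of $(X_1,\dots,X_l)$ matters, it suffices to produce $f$ for which $(f\circ T^i)_{i=0}^{l-1}$ has law $\nu^{\otimes l}$ and is independent of $\xi$. First choose $N\ge l$ (a suitably large $N$, e.g.\ $N\geq l$, will do).

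Consider the refined partition $\PP:=\xi\vee T^{-1}\xi\vee\dots\vee T^{-N}\xi$, discarding null atoms, and apply \cref{Thm: Aperiodic} to it. This gives bases $B_N,B_{N+1}$ of an Alpern tower, each independent of $\PP$. The key point is that the base of each tower is independent of the $\xi$-names of length $h\le N+1$ read upward from the base. Next, refine further: the restriction of $\PP$ to $B_h$ is a partition $\{B_h\cap p: p\in\PP\}$ with $\mu(B_h\cap p)=\mu(B_h)\mu(p)$. In a nonatomic setting we want to split each $B_h\cap p$ into pieces $\{B_h\cap p\cap E_{h,a}\}_{a\in A^h}$ with relative masses $\nu^{\otimes h}(a)$. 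Nonatomicity holds because $T$ is aperiodic, so $\mu$ has no atoms on a set of positive measure; otherwise a finite orbit would carry positive mass. Then define $f(T^jx):=a_j$ for $x\in B_h\cap E_{h,a}$ and $0\le j<h$. This is well defined since the tower levels partition $X$.

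Verification comes next. For a point $y=T^jx$ at level $j$ of tower $h$, the word $(f(T^iy))_{i=0}^{l-1}$ consists of $a_j,\dots,a_{h-1}$ followed by symbols from the next tower base reached. By construction, the labels $E$ are chosen independently of $\PP$ within each base. A point sampled at level $j$ also has $\xi$-value determined by its position relative to the base, and the base-label is independent of the $\PP$-name. Hence the first $h-j$ coordinates are i.i.d.\ $\nu$ and independent of $\xi(y)$.

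The main obstacle is the continuation into the next tower: one needs the label of the next base point to be independent of the current label and of $\xi(y)$. The simplest way to handle this is to choose the labels $E_{h,a}$ independently of the finer partition $\bigvee_{k}T^{-k}(\PP\vee\{B_N,B_{N+1}\})$ over a window of length $2(N+1)$, using nonatomicity of conditional measures on the base, so that consecutive tower labels are mutually independent. Alternatively, following Kosloff–Volný, I would first obtain the independent Alpern tower, view the return map structure as a stationary process, and check $\nu^{\otimes l}$ by summing over levels exactly as in their argument, which uses only properties (a), (b), (c) of \cref{fact: Kalikow}. These properties are now available for aperiodic $T$ by \cref{Thm: Aperiodic} and the computation \eqref{eq: 1}, \eqref{eq: 2}. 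The rest is a verbatim repetition of their proof, and I expect the bookkeeping at tower junctions to be the only delicate step.
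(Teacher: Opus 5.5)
Your fallback (repeat the Kosloff--Volny proof verbatim with \cref{Thm: Aperiodic} in place of the ergodic lemma) is exactly the paper's proof, which consists only of that pointer. You are also right that (c), should it be needed, survives: the proof of \cref{Thm: Aperiodic} gives $\mu(B_h)=1/(2h)$ (see \eqref{eq: 2}), although the statement omits it.

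The self-contained construction you sketch, however, has a genuine gap at the tower junctions, and your proposed remedy does not close it. Making the labels $E_{h,a}$ independent of $\bigvee_k T^{-k}(\PP\vee\{B_N,B_{N+1}\})$ decouples the label $L(x)$ of a base point from the heights and $\xi$-names of two consecutive columns. But the label $L(Rx)$ of the next base point $Rx=T^{h(x)}x$ is a value of the \emph{same} labelling, which is not measurable with respect to that partition. Independence of $L$ and $L\circ R$ is a self-referential requirement that your constraint does not imply. For instance, take $T=S\times\mathrm{id}_{[0,1]}$ with $\xi$ and the towers pulled back from the first factor. Labels depending only on the $[0,1]$-coordinate satisfy your constraint. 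Yet $L(Rx)$ is then a function of the same coordinate as $L(x)$, and the labels can be chosen to repeat a symbol across an $N$-to-$(N+1)$ junction.

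The missing idea is to label each column in two stages.
\begin{enumerate}
\item Take $N\ge 2l$. Let $R$ be the first return map to $B=B_N\sqcup B_{N+1}$, and $\mathcal{R}$ the partition of $B$ by height and $\xi$-name $(\xi(T^ix))_{i=0}^{N}$.
\item Choose body labels for levels $l-1,\dots,h-1$: a partition $\mathcal{E}^+$ of $B$ with conditional law $\nu^{\otimes(h-l+1)}$ on each atom of $\mathcal{R}$ inside $B_h$.
\item Choose head labels for levels $0,\dots,l-2$: a partition $\mathcal{E}^0$ with law $\nu^{\otimes(l-1)}$, independent of the finite partition $\mathcal{E}^+\vee\mathcal{R}\vee R(\mathcal{E}^+\vee\mathcal{R})$. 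This is possible by nonatomicity, since $R$ is an invertible measure preserving map of $B$.
\end{enumerate}
Since $N\ge 2l$, a window crossing a junction sees only the body of its own column and the head of the next. For $Q\in\mathcal{R}$ with $Q\subset B_h$ one has $\mu(\{x\in Q\cap E^+_b:\ Rx\in E^0_a\})=\mu(R(Q\cap E^+_b)\cap E^0_a)=\nu^{\otimes(l-1)}(a)\,\nu^{\otimes(h-l+1)}(b)\,\mu(Q)$. Hence every window is $\nu^{\otimes l}$-distributed conditionally on each atom of $\mathcal{R}$. Summing over levels $j$, where $\xi(T^jx)$ is $\mathcal{R}$-measurable, gives independence from $\xi$.

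Incidentally, this argument uses only the exact cover (a) and nonatomicity. The independence of $B_N,B_{N+1}$ from $\PP$, which you call the key point, is never used, neither here nor in your own within-column verification.
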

\begin{proof}
The proof is identical to \cite[Proposition 2]{Kosloff_Volny_2022}, the only change one needs to make is to replace \cite[Corollary 1]{MR3941195} with \cref{Thm: Aperiodic} in the beginning of the proof.
\end{proof}
We now illustrate its utility. Let $(X,\B,\mu,T)$ be an aperiodic (not necessarily ergodic) probability preserving system. For a positive integer $k$, let $d_k=2^{k^2}$, 
\[
p_k=\begin{cases}
2^k, &\ k\ \text{even},\\
2^{k-1}+1,&\ k\ \text{odd},
\end{cases}
\]
and $\alpha_k:=\frac{1}{p_k\sqrt{k\log (k+1)}}$ where $\log$ is the logarithm to base $2$. 

A finite valued function is fully described by a finite partition on which the function is constant on each atom together with the values it receives on each atom. Therefore, an inductive application of \Cref{prop: copying lemma} shows that there exists  a sequence of functions $\bar{f}_k:X\to \{-1,0,1\}$ with the following properties:
\begin{itemize}
\item for every $ k\in\N$, $\left(\bar{f}_k\circ T^j\right)_{j=0}^{2d_k+p_k}$ is distributed an i.i.d. sequence with 
\[
\mu(\bar{f}_k=1)=\mu(\bar{f}_k=-1)=\frac{\alpha_k^2}{2},\ \ \mu(\bar{f}_k=0)=1-\alpha_k^2.
\]
\item for every $2\leq k\in\N$,  $\left(\bar{f}_k\circ T^j\right)_{j=0}^{2d_k+p_k}$ is independent of 
\[
\left\{\bar{f}_l\circ T^j:\ 1\leq l<k,\ 0\leq j\leq 2d_k+p_k\right\}.
\]
\end{itemize}
Define $f=\sum_{k=1}^\infty f_k$ where 
\[
f_k:=\sum_{j=0}^{p_k-1}\left(\bar{f}_k\circ T^j-\bar{f}_k\circ T^{j+d_k}\right).
\]
It is shown in \cite[Proposition 3]{Kosloff_Volny_2022} that $f\in L^2(X,\mu)$ and in \cite[Theorem 4]{Kosloff_Volny_2022} that $f$ satisfies the lattice local central limit theorem with variance $\sigma^2=2(\ln2)^2$. As a consequence we have recovered the following version of the main result of \cite{Kosloff_Volny_2022} without the extra assumption of ergodicity. 

\begin{thm}
For every aperiodic, probability preserving system $(X,\B,\mu,T)$, there exists a square integrable $f:X\to \Z$ with $\int fd\mu=0$ which satisfies the lattice local central limit theorem. That is there exists $\sigma^2>0$ such that 
\[
\sup_{x\in\Z}\left|\sqrt{n}\mu(S_n(f)=x)-\frac{1}{\sqrt{2\pi\sigma^2}}\exp\left(-\frac{x^2}{2n\sigma^2}\right)\right|\xrightarrow[n\to\infty]{}0,
\]
where $S_n(f):=\sum_{k=0}^{n-1}f\circ T^k$. 
\end{thm}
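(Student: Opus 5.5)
The theorem is a direct consequence of \cref{prop: copying lemma} together with the results of \cite{Kosloff_Volny_2022}. Those results were proved for a specific construction, so the plan is to realize that same construction inside an arbitrary aperiodic system.

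The first step is to build, inductively in $k$, the functions $\bar f_k:X\to\{-1,0,1\}$ listed above. At stage $k$, let $\xi_k$ be the finite partition generated by $\{\bar f_l\circ T^j:\ 1\leq l<k,\ 0\leq j\leq 2d_k+p_k\}$; it is finite because each $\bar f_l$ takes finitely many values. Apply \cref{prop: copying lemma} with $A=\{-1,0,1\}$, $l=2d_k+p_k+1$, the given three-point law, and $\xi=\xi_k$. This produces $\bar f_k$ whose orbit block is i.i.d. with the prescribed law and is independent of $\xi_k$. The only hypothesis used is aperiodicity, and that is exactly what \cref{prop: copying lemma} now requires thanks to \cref{Thm: Aperiodic}.

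The second step is to observe that every statement in \cite[Proposition 3, Theorem 4]{Kosloff_Volny_2022} depends only on the joint distributions of the finite families $(\bar f_k\circ T^j)$. These joint distributions are completely determined by the two bulleted properties, together with the stationarity of $\mu$ under $T$. Consequently:
\begin{itemize}
\item $f=\sum_k f_k$ converges in $L^2(\mu)$ and $\int f\,d\mu=0$, because each $f_k$ is a coboundary-type difference with mean zero;
\item $f$ is $\Z$-valued, since the $L^2$ estimates give $\sum_k\mu(f_k\neq 0)<\infty$, so by Borel--Cantelli only finitely many $f_k$ are nonzero almost everywhere;
\item the lattice local CLT holds with $\sigma^2=2(\ln 2)^2$.
\end{itemize}
Ergodicity was used in \cite{Kosloff_Volny_2022} only to produce the $\bar f_k$, and never in the subsequent estimates, so the conclusion transfers verbatim.

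The main point to check is that the local limit argument in \cite{Kosloff_Volny_2022} uses no ergodic-theoretic input beyond these distributional properties. For instance, it should not rely on an ergodic theorem for $f$ itself. I would verify this by tracing the proof. It decomposes $S_n(f)$ into a main term, built from finitely many independent $\bar f_k$ blocks with $k$ chosen according to $n$, and a negligible remainder. Every probability it computes, such as $\mu(S_n(f)=x)$, is a function of finitely many of these joint laws. Once this is confirmed, the theorem follows.
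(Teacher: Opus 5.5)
Your route is the paper's: build the $\bar f_k$ by inductive use of the copying proposition, then invoke \cite[Proposition 3, Theorem 4]{Kosloff_Volny_2022}. Your remarks on $\Z$-valuedness (Borel--Cantelli) and mean zero (coboundary form) are fine. The problem is your reason for why those results transfer: it is false, and the check you propose could not succeed. The two bulleted properties together with $T$-invariance determine only two things: the law of each window $(\bar f_k\circ T^j)_{0\le j\le 2d_k+p_k}$ (and of its translates), and the window's independence from the earlier $\bar f_l$ on that same window. They say nothing about the joint law of $\bar f_k\circ T^{j}$ and $\bar f_k\circ T^{j'}$ when $|j-j'|>2d_k+p_k$. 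If $d_l\le n$, telescoping gives $S_n(f_l)=G_l-G_l\circ T^n$ with $G_l=\sum_{j=0}^{p_l-1}\sum_{i=0}^{d_l-1}\bar f_l\circ T^{i+j}$. This uses $\bar f_l\circ T^m$ for $m$ up to about $n+d_l+p_l$, far outside the window when $n\gg d_l$. So the law of $S_n(f)$, and in particular $\mu(S_n(f)=x)$, is \emph{not} in general a function of the laws fixed by the construction.

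The transfer is still legitimate, for a weaker reason. The argument in \cite{Kosloff_Volny_2022} uses only the bulleted properties and invariance of $\mu$, since their $\bar f_k$ came from the same copying procedure and had no further structure. The undetermined part is harmless. To see this, put $k_0=\min\{k:\ d_k>n\}$, $V_n=\sum_{l<k_0}S_n(f_l)$ and $W_n=\sum_{k\ge k_0}S_n(f_k)$. For $k\ge k_0$, the variable $S_n(f_k)$ uses only indices $\le d_k+n+p_k-2<2d_k+p_k$. Hence the second bullet makes $V_n,S_n(f_{k_0}),S_n(f_{k_0+1}),\ldots$ mutually independent and gives $W_n$ a law determined by the windows. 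Every $k$ with $p_k\le n<d_k$, which is where the variance $2(\ln 2)^2$ comes from, lies in $W_n$. On the other side, the triangle inequality and the window laws alone give $\|V_n\|_2\le 2\sum_{l<k_0}\|G_l\|_2=O\bigl(\sqrt{n/(k_0\log k_0)}\bigr)=o(\sqrt n)$. The Gaussian profile is $C/\sqrt n$-Lipschitz, so convolving the local limit estimate for $W_n$ with the independent perturbation $V_n$ changes the supremum by at most $C\,\mathbb{E}|V_n|/\sqrt n\to 0$. You should replace the claim that ``everything is determined'' by this independence-plus-smallness statement.
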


A similar argument can be made to remove the assumption of ergodicity of the transformation in the limit theorems in \cite{Kosloff_Volny_2022,kosloffVolnyStableCLT,MR4955410}.

\bibliographystyle{alpha}
\bibliography{lib}

\end{document}